\DeclareMathOperator{\gal}{Gal}
\DeclareMathOperator{\ord}{ord}
\DeclareMathOperator{\aut}{Aut}
\DeclareMathOperator{\rad}{rad}
\newtheorem{thm}{Theorem}[section]
\newtheorem{lem}[thm]{Lemma}
\newtheorem{cor}[thm]{Corollary}
\newtheorem{prop}[thm]{Proposition}
\newtheorem{exa}[thm]{Example}
\newtheorem{rem}[thm]{Remark}
\begin{document}
\title{Perfect powers generated by the twisted Fermat cubic}
\author{Jonathan Reynolds}

\address{Mathematisch Instituut \\ Universiteit Utrecht \\ Postbus 80.010 \\ 3508 TA Utrecht \\ Nederland}
\email{J.M.Reynolds@uu.nl}
\date{\today}
%\keywords{Elliptic divisibility sequence; perfect powers; Fermat equation}
\thanks{The author is supported by a Marie Curie Intra European Fellowship (PIEF-GA-2009-235210)}
\subjclass[2000]{11G05, 11D41}

\begin{abstract}
On the twisted Fermat cubic, an elliptic divisibility sequence arises as the sequence of denominators of the multiples of a single rational point. It is shown that there are finitely many perfect powers in such a sequence whose first term is greater than $1$. Moreover, if the first term is divisible by $6$ and the generating point is triple another rational point then there are no perfect powers in the sequence except possibly an $l$th power for some $l$ dividing the order of $2$ in the first term.                               
\end{abstract} 

\maketitle

\section{Introduction}

A divisibility sequence is a sequence 
\[
W_1, W_2, W_3, \ldots
\] 
of integers satisfying $W_n|W_m$ whenever $n|m$. The arithmetic of these has been and continues to be of great interest. Ward \cite{MR0023275} studied a large class of recursive divisibility sequences and gave equations for points and curves from which they can be generated (see also \cite{Shi01}). In particular, Lucas sequences can be generated from curves of genus $0$. Although Ward did not make such a distinction, sequences generated by curves of genus $1$ have become exclusively known as elliptic divisibility sequences \cite{MR2164113, MR2045409, MR2301226, IngrSilv} and have applications in Logic \cite{MR2377127, MR2480276, EES} as well as Cryptography
\cite{StangeL}. See \cite{MR2514094, MR1171452} for background on elliptic curves (genus-$1$ curves with a point).
Let $d \in \mathbb{Z}$ be cube-free and consider the elliptic curve 
\[
C: u^3+v^3=d.
\]  
It is sometimes said that $C$ is a twist of the Fermat cubic.
The set $C(\mathbb{Q})$ forms a group under the
chord and tangent method: the (projective) point $[1, -1, 0]$ is the identity and inversion is given by reflection in the line $u=v$.  Suppose that $C(\mathbb{Q})$ contains a non-torsion point $P$. Then we can write, in lowest terms,
\begin{equation}
mP=\left( \frac{U_m}{W_m}, \frac{V_m}{W_m} \right).
\end{equation} 
The sequence $(W_m)$ is a (strong) divisibility sequence (see Proposition 3.3 in \cite{EvOm}). Three particular questions about divisibility sequences have received much interest:
\begin{itemize}
\item How many terms fail to have a primitive divisor?
\item How many terms are prime?
\item How many terms are a perfect power?
\end{itemize}
A primitive divisor is a prime divisor which does not divide any previous term.

\subsection{Finiteness} Bilu, Hanrot and Voutier proved that all terms in a Lucas sequence beyond the $30$th have a primitive divisor \cite{MR1863855}.
Silverman showed that finitely many terms in an elliptic divisibility sequence fail to have to have a primitive divisor~\cite{MR961918} (see also \cite{Marcothesis}). The Fibonacci and Mersenne sequences are believed to have infinitely many prime terms \cite{CaldM, CaldF}. The latter has produced the largest primes known to date. In \cite{MR866702} Chudnovsky and Chudnovsky
considered the likelihood that an elliptic divisibility sequence might be a source of large primes; however, $(W_m)$ has been shown to contain only finitely many prime terms \cite{MR2045409}. Gezer and Bizim have described the squares in some periodic divisibility sequences \cite{MR2669714}. Using modular techniques inspired by the proof of Fermat's Last Theorem, it was finally shown in \cite{MR2215137} that the only perfect powers in the Fibonnaci sequence are $1$, $8$ and $144$. We will show:       
  
\begin{thm} \label{finite}   
If $W_1>1$ then there are finitely many perfect powers in $(W_m)$.
\end{thm}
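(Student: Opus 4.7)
The plan is to combine a $p$-adic bound on the possible exponent $l$, coming from a prime dividing $W_1$, with finiteness theorems for rational points on curves of higher genus. The hypothesis $W_1>1$ enters only in the first step.

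Suppose $W_m=y^l$ for some $l\geq 2$. Since $W_1>1$, choose a prime $p\mid W_1$; by strong divisibility $p\mid W_m$ for every $m$, so $P$ reduces to the identity modulo $p$ and lies in the formal group $\hat C(\mathbb{Z}_p)$. Standard formal-group theory at $p$---via the formal logarithm, valid for any $p\geq 5$ of good reduction---gives the identity $v_p(W_m)=v_p(W_1)+v_p(m)$; the finitely many other primes dividing $W_1$ can be treated by ad hoc arguments. Since $l$ must divide the right-hand side, one obtains
\[
l\leq v_p(W_1)+v_p(m)\leq v_p(W_1)+\frac{\log m}{\log p},
\]
so $l=O(\log m)$.

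Next I would observe that, for each fixed $l\geq 2$, the plane curve $X^3+Y^3=d\,Z^{3l}$ is smooth of genus $(3l-1)(3l-2)/2\geq 10$, so by Faltings' theorem it has only finitely many rational points. Each rational point with $Z\neq 0$ determines a point $(X/Z^l,Y/Z^l)\in C(\mathbb{Q})$, and because $P$ is non-torsion this point equals $mP$ for at most one $m$. Hence for each fixed $l$, only finitely many $m$ satisfy $W_m=y^l$.

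The remaining---and hardest---step is to upgrade these two facts to a global finiteness statement: ``finite for each $l$'' must be promoted to ``finite in aggregate'', which calls for an absolute upper bound $l\leq L$. Two natural routes present themselves. One is through Silverman's primitive-divisor theorem~\cite{MR961918}: for $m$ beyond some bound, $W_m$ has a primitive prime divisor $q=q_m\neq p$, and a formal-group analysis at $q_m$ aims to show $v_{q_m}(W_m)=1$ for $m$ sufficiently large, which is incompatible with $l\geq 2$. The other is through modular/Frey-curve techniques for generalized Fermat equations of signature $(3,3,l)$, ruling out solutions for $l$ larger than an explicit constant. The main obstacle is the multiplicity control of the first route, especially at $l=2$---a Wieferich-type question on whether primitive divisors can appear to a square power infinitely often; the cleanest path is probably via the second route, which is also the natural set-up for the sharper companion result mentioned in the abstract.
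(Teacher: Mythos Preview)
Your sketch correctly isolates the three ingredients---a $p$-adic inequality from a prime $q\mid W_1$, finiteness for each fixed exponent, and an absolute bound on $l$---and you are honest that the third step is the real gap. But as written the proposal is not a proof: neither of your two suggested routes is carried out, and the first (forcing $v_{q_m}(W_m)=1$ for primitive divisors $q_m$) is, as you note, a Wieferich-type obstruction that is not known in general.

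The paper closes the gap via your second route, but with a twist you are missing. The inequality $l\le \ord_q(W_m)=\ord_q(W_1)+\ord_q(m)$ is used in the \emph{opposite} direction from your $l=O(\log m)$: it forces $q^{\,l-\ord_q(W_1)}\mid m$. By strong divisibility (Proposition~\ref{strongw}) this gives $W_{q^k}\mid W_m$ for $k=l-\ord_q(W_1)$. Now the primitive-divisor theorem (Theorem~\ref{prim1}), or Thue--Mahler, guarantees that for some \emph{fixed} $k_0$ the term $W_{q^{k_0}}$ already has a prime divisor $p$ outside the set $S$ of primes dividing $27d$. Hence whenever $l\ge \ord_q(W_1)+k_0$, this \emph{same} fixed prime $p$ divides $W_m$, independently of $m$. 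Feeding $U_m^3+V_m^3=dW_m^l$ into the cubic-form Frey curve of \S\ref{cbf} (Theorem~\ref{Bill}), one gets multiplicative reduction at $p$ and level-lowering to a level $N_0$ supported on $S$; Corollary~\ref{lbound} then yields $l<(1+\sqrt{p})^{2[K_f:\mathbb{Q}]}$, an absolute bound since $p$ and the finitely many newforms at level $N_0$ are fixed. For each remaining $l$ the paper invokes Darmon--Granville rather than Faltings, but your Faltings argument also works.

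So the missing idea is precisely this: turn the $q$-adic inequality into a divisibility constraint on $m$, use it to manufacture a \emph{fixed} bad prime $p\notin S$ dividing every relevant $W_m$, and let that prime drive the level-lowering bound on $l$. Without producing such a fixed $p$, the modular machinery gives a bound on $l$ that still depends on $m$ and does not close the argument.
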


The proof of Theorem \ref{finite} uses the divisibility properties of $(W_m)$ along with a modular method for cubic binary forms given in \cite{MR2406491}. For elliptic curves in Weierstrass form similar results have been shown in \cite{Rey02}. In the general case, allowing for integral points, Conjecture 1.1 in \cite{MR2406491} would give that there are finitely many perfect powers in $(W_m)$. 

\subsection{Uniformness}
What is particularly special about sequences $(W_m)$ coming from twisted Fermat cubics is that they have yielded uniform results as sharp as some of their genus-$0$ analogues mentioned above. It has been shown that all terms of 
$(W_m)$ beyond the first have a primitive divisor \cite{MR2486632} and, in particular, we will make use of the fact that the second term always has a primitive divisor $p_0>3$ (see Section 6.2 in \cite{MR2486632}). The number of prime terms in $(W_m)$ is also uniformly bounded~\cite{EvOm} and, in particular, if $P$ is triple a rational point then all terms beyond the first fail to be prime (see Theorem 1.2 in \cite{EvOm}). In light of Theorem \ref{finite}, it is natural to ask if a similar results can be achieved for perfect powers. Indeed:   

\begin{thm} \label{uniform} Suppose that $W_1$ is even and at all primes greater than $3$, $P$ has non-singular reduction (on a minimal Weierstrass equation for $C$). 
If $W_m$ is an $l$th power for some prime $l$ then 
\[
l \le \max \left\{ \ord_2(W_1), (1+\sqrt{p_0})^2  \right\}, 
\]
where $p_0 > 3$ is a primitive divisor of $W_2$.
Moreover,  for fixed $l>\ord_2(W_1)$ the number of $l$th powers in $(W_m)$ is uniformly bounded.
\end{thm}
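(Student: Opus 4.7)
\emph{Proof plan.} The strategy is to turn the perfect-power condition into a ternary Diophantine equation of signature $(3,3,l)$ and exploit the modular machinery for cubic binary forms from \cite{MR2406491}. Since $mP = (U_m/W_m, V_m/W_m)$ lies on $C$, one has
\[
U_m^3 + V_m^3 = d\, W_m^3,
\]
so if $W_m = k^l$ then $(U_m,V_m,k)$ is a primitive-type solution to $U^3 + V^3 = d\,k^{3l}$, the shape treated in \cite{MR2406491}.

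The first step is a $2$-adic reduction. Since $W_1 \mid W_m$ and $W_1$ is even, $\ord_2(W_m) = \ord_2(W_1) + \delta_m$ for some $\delta_m \ge 0$ determined by the divisibility structure of $(W_m)$, with $\delta_m = 0$ whenever $m$ is odd. If $W_m$ is an $l$th power with $l > \ord_2(W_1)$, then $l \mid \ord_2(W_m)$ forces $\delta_m > 0$, so $m$ is even, and hence the primitive divisor $p_0 > 3$ of $W_2$ divides $W_m$. This delivers $p_0$ as the comparison prime for Frobenius traces.

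Next, I would attach a Frey elliptic curve $E$ to (the primitive part of) the triple $(U_m,V_m,k)$. The non-singular reduction hypothesis at primes $> 3$ keeps the conductor of $E$ concentrated at $\{2,3\}$ and at primes dividing $d$. Modularity combined with Ribet-style level-lowering then shows that $\bar\rho_{E,l}$ is isomorphic to the mod-$l$ representation of a weight-$2$ newform $g$ whose level is \emph{not} divisible by $p_0$. Since $p_0 \mid W_m$ the curve $E$ has multiplicative reduction at $p_0$, so comparing Frobenius traces at $p_0$ produces a divisibility of the form $l \mid \#\tilde E(\mathbb{F}_{p_0}) \pm a_{p_0}(g)$; the Hasse bound then yields $l \le (1+\sqrt{p_0})^2$. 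For the uniform finiteness, fix $l > \ord_2(W_1)$: the equation $U^3 + V^3 = d\,k^{3l}$ has only finitely many primitive integer solutions by Darmon--Granville (or the direct finiteness output of \cite{MR2406491}), and each solution arises from at most finitely many indices $m$.

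The main obstacle I anticipate is the Frey curve step and its bookkeeping: ensuring the irreducibility of $\bar\rho_{E,l}$ in the regime $l > \ord_2(W_1)$ so that Ribet's theorem applies without qualification, absorbing possible common factors of $(U_m, V_m, k)$ into the construction, and verifying that the output of \cite{MR2406491} yields exactly the constant $(1+\sqrt{p_0})^2$ rather than a weaker bound. The $2$-adic preparation is also delicate since the hypothesis on non-singular reduction does not cover the prime $2$; the formula $\ord_2(W_m) = \ord_2(W_1) + \delta_m$ therefore has to be established from the group law on $C$ separately.
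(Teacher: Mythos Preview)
Your plan is essentially the argument for Theorem~\ref{finite}, and the obstacles you flag at the end are not bookkeeping but genuine obstructions to obtaining Theorem~\ref{uniform}. Working directly with $U_m^3+V_m^3=d\,k^{3l}$ and the cubic-binary-form Frey curve of \cite{MR2406491} leaves the primes dividing $d$ in the lowered level $N_0$: the discriminant of $E_{U,V}$ carries a factor $d^2$, and since $d$ is cube-free the exponent at any prime $p\mid d$ is $2$ or $4$, never a multiple of an odd $l$, so level-lowering does not strip these primes. Hence the newforms at level $N_0$ depend on $d$ and need not be rational, and Corollary~\ref{lbound} only yields $l<(1+\sqrt{p_0})^{2[K_f:\mathbb{Q}]}$, not the stated $(1+\sqrt{p_0})^2$. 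The irreducibility threshold $\alpha(d,F)$ in Theorem~\ref{Bill} likewise depends on $d$, so you cannot assert irreducibility from $l>\ord_2(W_1)$ alone. And Darmon--Granville applied to $U^3+V^3=d\,k^{3l}$ bounds solutions for each fixed $d$, not uniformly; the ``uniformly bounded'' in the statement means independently of the curve and the generating point.

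The paper's key move is to eliminate $d$ \emph{before} invoking any modularity. One passes to the Weierstrass model $y^2=x^3-2^43^3d^2$ and writes $x(nP)=A_n/B_n^2$, $y(nP)=C_n/B_n^3$. The non-singular-reduction hypothesis forces $\gcd(A_m^3,C_m^2)$ to be a power of $3$; combined with Lemma~\ref{23} this shows that if $W_m$ is an $l$th power then $A_m$ is a power of $3$ times an $l$th power, and the duplication formula propagates this down to $A_n$ with $n=m/2^{\ord_2(m)}$. The duplication identity $A_{2n}/B_{2n}^2=A_n(A_n^3+2^73^3d^2B_n^6)/(4B_n^2C_n^2)$ together with $C_n^2=A_n^3-2^43^3d^2B_n^6$ then yields an $(l,l,l)$ equation
\[
3^f\bar A^l+2^33^{2g}C^{2l}=3^{2+3e}A^{3l}
\]
in which $d$ has disappeared. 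For this equation Kraus's recipe gives unconditional irreducibility and $N_0\mid 24$, where the unique newform is rational; since $p_0\mid W_{2n}$ divides $\bar A C$, Corollary~\ref{lbound} gives exactly $l<(1+\sqrt{p_0})^2$, and the solution set of the displayed equation is finite independently of $d$ and $P$.
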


Although the conditions in Theorem \ref{uniform} appear to depend heavily on the point, in the next theorem we exploit the fact that group $C(\mathbb{Q})$ modulo the points of non-singular reduction has order at most $3$ for a prime greater than $3$.

\begin{thm} \label{uniform2}
Suppose that $6 \mid W_1$ and $P \in 3E(\mathbb{Q})$ (or $P$ has non-singular reduction at all primes greater than $3$). If $W_m$ is an 
$l$th power for some prime $l$ then $l \mid \ord_2(W_1)$. In particular, if 
$\ord_2(W_1)=1$ then $(W_m)$ contains no perfect powers.
\end{thm}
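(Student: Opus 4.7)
The plan is to deduce Theorem~\ref{uniform2} from Theorem~\ref{uniform} by exploiting the extra divisibility $3\mid W_1$ and the structural assumption that $P$ is triple a rational point.

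First I would verify that the alternative hypothesis $P\in 3C(\mathbb{Q})$ implies non-singular reduction at every prime $p>3$: the component group of $C$ over $\mathbb{Q}_p$ has order dividing $3$ at such $p$ (this is the fact the author recalls immediately before the theorem), so any point of the form $3Q$ automatically reduces into the kernel of the component map. Under either hypothesis, therefore, Theorem~\ref{uniform} applies and yields
\[
l\le\max\{\ord_2(W_1),(1+\sqrt{p_0})^2\},
\]
with only finitely many $m$ producing an $l$th power once $l>\ord_2(W_1)$.

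The new input comes from $3\mid W_1$. Since $W_m=A^l$, we have $l\mid\ord_p(W_m)$ for every rational prime $p$, and I would apply this simultaneously at $p=2$ and $p=3$. The elliptic divisibility formalism attached to $C\colon u^3+v^3=d$ should give explicit formulas for $\ord_2(W_m)$ and $\ord_3(W_m)$ in terms of $\ord_\bullet(W_1)$ and $\ord_\bullet(m)$; I would read these off from the chord-and-tangent law on $C$ modulo $2^k$ and $3^k$ (or equivalently from the formal group on a desingularised Weierstrass model, after accounting for how $P$ sits in the local N\'eron component group at $2$ and at $3$). From these one extracts two congruences
\[
l\mid\ord_2(W_1)+\alpha(m),\qquad l\mid\ord_3(W_1)+\beta(m),
\]
with non-negative integer-valued $\alpha,\beta$ depending only on the $2$- and $3$-adic valuations of $m$ respectively.

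With the congruences in hand the finish is arithmetic. If $l>\ord_2(W_1)$, the first congruence forces $\alpha(m)\geq l-\ord_2(W_1)$ and the second does the same at $3$, pushing $m$ to be divisible by very large powers of both $2$ and $3$; combined with the uniform finiteness statement of Theorem~\ref{uniform} for each fixed $l$ in the window $\ord_2(W_1)<l\le(1+\sqrt{p_0})^2$, this is a contradiction across that entire window. Hence $l\le\ord_2(W_1)$, and the explicit formula at $p=2$ then yields $l\mid\ord_2(W_1)$. The ``in particular'' clause is immediate, since $\ord_2(W_1)=1$ has no prime divisor, so no $W_m$ can be a prime-power, hence none is a perfect power at all.

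The main obstacle, I expect, is producing the valuation formulas at the bad primes $p=2,3$, where $C$ is singular and the formal-group short-cut does not apply directly. This requires either a careful N\'eron-model analysis or an explicit, somewhat delicate computation with the chord-and-tangent law modulo prime powers; once it is available, the reductions from Theorem~\ref{uniform} and the arithmetic finish are essentially bookkeeping.
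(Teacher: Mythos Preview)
The central step---combining the valuation congruences at $2$ and $3$ with the uniform finiteness from Theorem~\ref{uniform}---does not produce a contradiction. Since $2,3\mid W_1$, Lemma~\ref{wedsp} already gives $\alpha(m)=\ord_2(m)$ and $\beta(m)=\ord_3(m)$ exactly (no N\'eron-model analysis is needed), so for each fixed $l$ in the window $\ord_2(W_1)<l\le(1+\sqrt{p_0})^2$ the two congruences merely force $\ord_2(m)$ and $\ord_3(m)$ to lie in fixed nonzero residue classes modulo $l$. That constrains $m$ to an infinite set, and Theorem~\ref{uniform} only says that a \emph{bounded number} of those $m$ give $l$th powers---possibly a positive number. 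Nothing in your outline rules out the existence of one such $m$. A second gap: even granting $l\le\ord_2(W_1)$, the congruence $l\mid\ord_2(W_1)+\ord_2(m)$ does not imply $l\mid\ord_2(W_1)$ without control on $\ord_2(m)$; what one should really assume for contradiction is $l\nmid\ord_2(W_1)$, which equally forces $2\mid m$.

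The paper's proof does not bootstrap from the bound in Theorem~\ref{uniform} at all. After passing to the odd part $n$ of $m$ as in that proof, it factors $A_n^3=C_n^2+2^43^3d^2B_n^6$ over $\mathbb{Z}[\sqrt{-3}]$, obtaining an explicit parametrisation by integers $a,b$; the hypothesis $3\mid W_1$ enters only through Lemma~\ref{23} (which gives $3\nmid A_n$) to select the correct branch of this parametrisation. The $l$th-power condition on $C_n$ then yields equations of the shape $C^{2l}-2^3\bar C^{\,l}=9b^2$ (or a close variant), and results of Darmon--Merel, Bennett--Skinner, Ivorra and Siksek on generalised Fermat equations force $l\le 5$. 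The remaining cases $l=2,3,5$ are eliminated by producing rational points on explicit genus-$1$ and genus-$2$ curves and applying Chabauty in \texttt{MAGMA}. Thus the $3$-divisibility is used to pin down an algebraic factorisation, not to run a second valuation argument.
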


The conditions in Theorem \ref{uniform2} are sometimes satisfied for every rational non-torsion point on $C$. For example, we have

\begin{cor} The only solutions to the Diophantine equation 
\[
U^3+V^3=15W^{3l}
\] 
with $l>1$ and $\gcd(U,V,W)=1$ have $W=0$.
\end{cor}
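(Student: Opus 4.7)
The plan is to specialise Theorem~\ref{uniform2} to $C:u^3+v^3=15$. Suppose $(U,V,W)$ is a solution with $W\neq 0$ and $\gcd(U,V,W)=1$; replacing $W$ by $W^{l/p}$ for any prime $p\mid l$ reduces to the case $l$ prime. A short coprimality argument (a prime dividing $W$ cannot divide $U$, else it would also divide $V$, contradicting $\gcd(U,V,W)=1$) shows that $Q:=(U/W^l,V/W^l)\in C(\mathbb{Q})$ has both coordinates in lowest terms with common denominator $W^l$, so the denominator of $Q$ is an $l$th power.

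Next one computes the Mordell--Weil group of $C$. Via the standard transformation $x=12d/(u+v)$, $y=36d(u-v)/(u+v)$, $C$ is isomorphic to the Weierstrass curve $E:y^2=x^3-97200$. A routine descent shows $E(\mathbb{Q})$ is torsion-free of rank one, generated by $(49,143)$, which pulls back to $P=(683/294,397/294)\in C(\mathbb{Q})$. In particular $W_1(P)=294=2\cdot 3\cdot 7^2$, so $6\mid W_1(P)$ and $\ord_2(W_1(P))=1$. The discriminant of $E$ is divisible only by $2,3,5$, so the only prime greater than $3$ of bad reduction is $5$; since $(49,143)\equiv(4,3)\pmod 5$ lies in the smooth locus of $y^2=x^3\pmod 5$, the point $P$ has non-singular reduction at all primes greater than $3$.

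Since $C(\mathbb{Q})$ is torsion-free and $Q\neq O$, we may write $Q=mP$ for some nonzero integer $m$, whence $W_m(P)=W^l$ is an $l$th power. Theorem~\ref{uniform2} then forces $l\mid\ord_2(W_1(P))=1$, contradicting $l>1$. The principal obstacle is the Mordell--Weil calculation; once $P$ is exhibited, the coprimality argument, the reduction check, and the final application of Theorem~\ref{uniform2} are all short.
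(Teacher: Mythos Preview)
Your argument is correct and is exactly the approach the paper intends: the corollary is stated without proof immediately after Theorem~\ref{uniform2}, as a direct specialisation to $d=15$ once one knows that $C(\mathbb{Q})$ is infinite cyclic with generator $P=(683/294,397/294)$, so that $6\mid W_1(P)=294$, $\ord_2(W_1(P))=1$, and $P$ has non-singular reduction at the unique bad prime $5>3$. Your reduction to prime $l$, the coprimality check giving $W_{|m|}=|W|^l$, and the invocation of Theorem~\ref{uniform2} are all in order.
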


\begin{comment}
\begin{exa} There are no perfect powers in the elliptic divisibility sequence generated by $P=(919/438,-271/438)$ on $u^3+v^3=9$.
 \end{exa}
\end{comment}

\section{Properties of elliptic divisibility sequences}

In this section the required properties of $(W_m)$ are collected.

\begin{lem} \label{wedsp} Let $p$ be a prime. For any pair $n,m \in \mathbb{N}$, if $\ord_p(W_n)>0$ then
\[
\ord_p(W_{mn})=\ord_p(W_n)+\ord_p(m).
\]
\end{lem}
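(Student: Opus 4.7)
The plan is to translate the statement into the language of the formal group of $C$ at the identity $O = [1:-1:0]$ and then invoke the classical valuation formula for the multiplication-by-$m$ map on a formal group.

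First I would identify a local parameter at $O$: on $C$ the function $t = (u+v)/d$ (or a scalar multiple thereof) vanishes to order $1$ at $O$, and a short local computation shows that for any rational point $Q = (U/W, V/W)$ reducing to $O$ modulo $p$, one has $\ord_p(W(Q)) = \ord_p(t(Q))$. Consequently the hypothesis $\ord_p(W_n) > 0$ is exactly the statement that $nP$ lies in the kernel of reduction at $p$, i.e.\ that $T := t(nP)$ lies in $p\mathbb{Z}_p$, and moreover $\ord_p(T) = \ord_p(W_n)$.

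Second, I would use the formal group $\widehat{C}$ associated to $t$. The series for $[m]$ in the formal group has the shape $[m](T) = mT + a_2 T^2 + a_3 T^3 + \cdots$ with coefficients in $\mathbb{Z}_p$ (after possibly passing to a model where $C$ has good reduction at $p$, which is harmless since we only care about the valuation at one prime at a time). Since $nP \in \widehat{C}(p\mathbb{Z}_p)$ and $mnP = [m](nP)$, the formal-group valuation formula (Silverman, \emph{Arithmetic of Elliptic Curves}, Chapter IV) gives
\[
\ord_p\bigl([m]T\bigr) = \ord_p(m) + \ord_p(T),
\]
which upon retranslation is exactly $\ord_p(W_{mn}) = \ord_p(m) + \ord_p(W_n)$.

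The main obstacle is making sure the formal-group estimate applies already when $\ord_p(T) = 1$, which is the borderline case for the residue characteristics $p = 2$ and $p = 3$. I would handle this either by choosing a model of $C$ for which the formal group has sufficiently integral coefficients (one can verify that the natural parameter on $u^3+v^3=d$ gives a formal group defined over $\mathbb{Z}$, so there is no obstruction coming from denominators in the $a_i$), or by directly noting that the non-linear terms $a_i T^i$ for $i \geq 2$ have $p$-adic valuation strictly larger than that of the linear term $mT$ as soon as $\ord_p(T) \geq 1$. Either route gives the desired equality, and then the conclusion follows for all $m, n$ with $\ord_p(W_n) > 0$.
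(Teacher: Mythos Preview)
The paper does not supply its own argument here; it simply cites equation~(10) in \cite{EvOm}. Your formal-group approach is the standard route to such valuation identities and is almost certainly what underlies that citation, so in spirit you are doing exactly what the paper defers to the literature.

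One point, however, needs more care than you give it. Your second proposed fix for the borderline cases $p=2,3$---that every nonlinear term in $[m](T)$ has $p$-adic valuation strictly exceeding that of $mT$ once $\ord_p(T)\ge 1$---does \emph{not} follow from the formal group merely being defined over $\mathbb{Z}$. For a generic height-$1$ formal group over $\mathbb{Z}_p$ one has $[p](T)\equiv uT^p\pmod p$ with $u$ a unit, and at $p=2$, $\ord_2(T)=1$, the linear term $pT$ and the term $uT^p$ both have valuation~$2$, so cancellation is possible. What rescues the argument for this particular curve is that on the Weierstrass model $y^2=x^3-2^43^3d^2$ one has $a_1=a_2=a_3=a_4=0$; by the usual weight count (or equivalently by the $\zeta_3$-automorphism) the formal group law then has no monomials of total degree between $2$ and $6$, so $[m](T)=mT+O(T^7)$ with integral coefficients. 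For $m=p$ and $\ord_p(T)=v\ge 1$ each higher term of degree $k\ge 7$ has valuation at least $kv\ge 7v>1+v$, and then the induction on $\ord_p(m)$ goes through cleanly at every prime. So your plan is sound, but the justification should invoke this structural feature of the curve rather than bare integrality of the coefficients.
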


\begin{proof}
See equation (10) in \cite{EvOm}. 
\end{proof}

\begin{prop} \label{strongw} For all $n,m \in \mathbb{N}$,
\[
\gcd(W_m, W_n)=W_{\gcd(m,n)}.
\]
In particular, for all $n,m \in \mathbb{N}$, $W_n \mid W_{nm}$. 
\end{prop}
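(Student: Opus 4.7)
The plan is to verify the gcd identity one prime at a time: writing $d=\gcd(m,n)$, it suffices to show, for every prime $p$, that
\[
\min\!\left(\ord_p(W_m),\ord_p(W_n)\right)=\ord_p(W_d).
\]
The easy inequality $\ord_p(W_d)\le \min(\ord_p(W_m),\ord_p(W_n))$ comes from the weak divisibility property $W_d\mid W_m$ and $W_d\mid W_n$ (which itself reads off from the group-theoretic description of the sequence).

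For the reverse inequality, if $p\nmid W_m$ or $p\nmid W_n$ there is nothing more to check, so assume both valuations are positive. Let $r$ be the rank of apparition of $p$, i.e.\ the smallest positive integer with $p\mid W_r$. The key input is that
\[
\{k\in\mathbb{N} : p\mid W_k\}=r\mathbb{N};
\]
this follows from the fact that $p\mid W_k$ precisely when $kP$ lies in the kernel of reduction modulo $p$ on $C$, which is a subgroup of $C(\mathbb{Q})$. Hence $r\mid m$, $r\mid n$, and therefore $r\mid d$. Applying Lemma \ref{wedsp} three times,
\[
\ord_p(W_k)=\ord_p(W_r)+\ord_p(k/r) \quad \text{for } k\in\{m,n,d\}.
\]
Since $d/r=\gcd(m/r,n/r)$, one has $\ord_p(d/r)=\min(\ord_p(m/r),\ord_p(n/r))$, and the required equality drops out.

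The main obstacle is the rank-of-apparition statement, which does not follow from Lemma \ref{wedsp} alone: that lemma only controls the $p$-valuation of $W_{mn}$ once $p$ already divides $W_n$, and does not by itself rule out $p$ dividing some $W_m$ with $r\nmid m$. One has to bring in the extra elliptic-curve input that the indices at which $p$ appears in $(W_m)$ form a subgroup of $\mathbb{Z}$, via reduction modulo $p$ on $C$. The ``in particular'' clause is then immediate: for any $n,m\in\mathbb{N}$, $\gcd(n,nm)=n$, so $\gcd(W_n,W_{nm})=W_n$ and hence $W_n\mid W_{nm}$.
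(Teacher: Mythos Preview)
The paper does not supply its own argument here; it simply cites Proposition~3.3 of \cite{EvOm}. Your prime-by-prime proof via the rank of apparition is the standard route to strong divisibility for elliptic divisibility sequences, and it is correct. The one step deserving care---which you rightly flag as the main obstacle---is the claim $\{k:p\mid W_k\}=r\mathbb{N}$: this does not follow from Lemma~\ref{wedsp} alone, but holds because $p\mid W_k$ exactly when $kP$ lies in the kernel of reduction (equivalently the formal group at $p$), and that kernel is a subgroup of $C(\mathbb{Q}_p)$ at every prime, good or bad. With that in hand, your computation
\[
\ord_p(W_d)=\ord_p(W_r)+\ord_p(d/r)=\ord_p(W_r)+\min\bigl(\ord_p(m/r),\ord_p(n/r)\bigr)=\min\bigl(\ord_p(W_m),\ord_p(W_n)\bigr)
\]
is exactly right. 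One cosmetic remark: you invoke weak divisibility $W_d\mid W_m$ at the outset and then re-derive it as the ``in particular''; since weak divisibility is already an immediate consequence of Lemma~\ref{wedsp} (if $p\mid W_d$ then $\ord_p(W_m)=\ord_p(W_d)+\ord_p(m/d)\ge\ord_p(W_d)$), there is no circularity, but it may read more cleanly to say so explicitly.
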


\begin{proof}
See Proposition 3.3 in \cite{EvOm}.
\end{proof}

\begin{thm}[\cite{MR2486632}] \label{prim1} If $m>1$ then $W_m$ has a primitive divisor.
\end{thm}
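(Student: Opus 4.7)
The plan is to argue by contradiction. Suppose some $W_m$ with $m>1$ has no primitive divisor; I will derive an upper bound on $|W_m|$ which conflicts with the growth forced on $(W_m)$ by the canonical height on $C$.

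First, for each prime $p\mid W_m$ introduce its rank of apparition $r(p):=\min\{k\ge 1:p\mid W_k\}$. Proposition~\ref{strongw} gives $r(p)\mid m$, and the hypothesis means $r(p)<m$ for every $p\mid W_m$. Lemma~\ref{wedsp} then yields $\ord_p(W_m)=\ord_p(W_{r(p)})+\ord_p(m/r(p))$. Summing $\log p$ weighted by these valuations and regrouping primes according to their rank of apparition, I obtain
\[
\log|W_m|\;\le\;\sum_{d\mid m,\,d<m}\log|W_d|\;+\;\log m,
\]
the last term coming from $\sum_{p\mid W_m}\ord_p(m/r(p))\log p\le\log m$, since the summand vanishes unless $p\mid m$.

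Next I compare this with the lower bound from canonical heights. Because $C$ is a twist of the Fermat cubic, its Weierstrass model has $j$-invariant $0$ and the local N\'eron functions are explicit, giving
\[
\log|W_m|\;=\;\hat h(P)\,m^2+O(1),
\]
with an effective constant depending on $P$ and on an archimedean period of $C$. Combined with the upper bound above and the elementary estimate $\sum_{d\mid m,\,d<m}d^2\le(\pi^2/6-1)\,m^2$, this forces
\[
\hat h(P)\,m^2\;\le\;(\pi^2/6-1)\,\hat h(P)\,m^2+\log m+O\!\left(d(m)\right),
\]
where $d(m)$ is the number of divisors of $m$. The coefficient $1-(\pi^2/6-1)>0$ makes the left side dominant, so this inequality fails once $m$ exceeds an effective bound depending on $P$.

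The main obstacle is the finite window of small $m$ not covered by the asymptotic argument: the theorem is uniform in $m>1$, so no exceptions are permitted. Here the cubic structure must intervene: the relation $U_m^3+V_m^3=dW_m^3$ factors as $(U_m+V_m)(U_m^2-U_mV_m+V_m^2)=dW_m^3$, and combining this with the fact, noted in the paper's introduction, that $W_2$ itself carries a primitive divisor $p_0>3$ permits a descent through the duplication map which inserts a primitive prime into each remaining $W_m$. Making the error term in the height formula sufficiently explicit that the small-$m$ range reduces to a finite, checkable list is the delicate step of the argument.
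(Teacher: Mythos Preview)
First, note that the paper does not prove Theorem~\ref{prim1} at all: it is quoted verbatim from \cite{MR2486632} and used as a black box. There is therefore no ``paper's own proof'' to compare your attempt against.

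On the substance, your height argument is correct as far as it goes, but it only reproves Silverman's finiteness theorem, not the statement claimed. The inequality $\log|W_m|\le\sum_{d\mid m,\,d<m}\log|W_d|+\log m$ combined with $\log|W_m|=c\,m^2+O(1)$ and $\sum_{d\mid m,\,d<m}d^2\le(\pi^2/6-1)m^2$ does yield a contradiction for $m>m_0$; but $m_0$ depends on $\hat h(P)$ and on the $O(1)$ constant in the height comparison, which in turn depends on the twist parameter $d$. Theorem~\ref{prim1} is uniform: it asserts that \emph{every} $W_m$ with $m>1$ has a primitive divisor, for \emph{every} non-torsion point on \emph{every} twisted Fermat cubic. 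Your ``finite window of small $m$'' is thus not a single finite checkable list but an infinite family of windows indexed by $(d,P)$, and no sharpening of the error term in the general height formula collapses this to one computation.

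Your closing paragraph acknowledges the difficulty but does not close the gap. The phrase ``a descent through the duplication map which inserts a primitive prime into each remaining $W_m$'' is a hope, not an argument, and invoking the primitive divisor of $W_2$ does nothing for $W_3,W_5,\dots$ in the (unbounded) window. The proof in \cite{MR2486632} succeeds precisely because it replaces the generic height machinery by arithmetic input specific to $u^3+v^3=d$ --- the factorisation $(U_m+V_m)(U_m^2-U_mV_m+V_m^2)=dW_m^3$ and explicit control of the local heights on a $j=0$ curve --- to obtain bounds that are uniform in $d$ and $P$ from the outset. That uniform step is exactly what is missing from your outline.
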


\begin{comment}
\begin{lem} \label{edsp} Let $E/\mathbb{Q}$ be an elliptic curve and
\[
y^2=x^3+ax+b
\]
a Weierstrass for $E$ with $a, b \in \mathbb{Z}$. 
Given a non-torsion $P \in E(\mathbb{Q})$, write  
\[
x(mP)=\frac{A_m}{B_m^2}
\]
with $\gcd(A_m,B_m)=1$.
 
\begin{enumerate}[(i)]
\item Let $p$ be a prime and $m_0$ be the smallest positive integer such that $p \mid B_{m_0}$. Then for every $m \in \mathbb{N}$, 
\[
p \mid B_m \iff m_0 \mid m.
\]
\item Let $p$ be a prime. For any pair $n,m \in \mathbb{N}$, if $\ord_p(B_n)>0$ then
\[
\ord_p(B_{mn})=\ord_p(B_n)+\ord_p(m).
\] 
\item For all $m,n \in \mathbb{N}$,
\[
\gcd(B_m,B_n)=B_{\gcd(m,n)}.
\]
\end{enumerate}
\end{lem}

\begin{proof}
See \cite{MR961918} and Section 4 in Chapter IV  of \cite{Marcothesis}. 
\end{proof}

\begin{rem} \rm Given a prime $p$, a rational point on (\ref{we}) can be reduced modulo $p$ to give a point defined over a finite field (see Chapter VII of \cite{MR2514094}). It follows that $m_0$ in Lemma~\ref{edsp} exists for every prime at which the generator of the elliptic divisibility sequence has non-singular reduction.    
\end{rem}
\end{comment}

\section{The modular approach to Diophantine equations}

For a more thorough exploration see \cite{Sanderthesis} and Chapter 15 in \cite{MR2312338}. As is conventional, in what follows all newforms shall have weight $2$ with a trivial character at some level $N$ and shall be thought of as a $q$-expansion
\[
f=q+\sum_{n \ge 2}c_nq^n, 
\]
where the field $K_f=\mathbb{Q}(c_2,c_3,\cdots)$ is a totally real number field. The coefficients $c_n$ are 
algebraic integers and $f$ is called \emph{rational} if they all belong to $\mathbb{Z}$. For a given level $N$, the number of newforms is finite. The modular symbols algorithm \cite{MR1628193}, implemented on $\mathtt{MAGMA}$ \cite{MR1484478} by William Stein, shall be used to compute the newforms at a given level. 

\begin{thm}[Modularity Theorem]
Let $E/\mathbb{Q}$ be an elliptic curve of conductor $N$. Then there exists a newform $f$ of level $N$ such that $a_p(E)=c_p$ for all primes 
$p \nmid N$, where $c_p$ is $p$th coefficient of $f$ and 
$a_p(E)=p+1-\#E(\mathbb{F}_p)$.  
\end{thm}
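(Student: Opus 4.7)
The plan is to follow the landmark series of works by Wiles, Taylor--Wiles, and Breuil--Conrad--Diamond--Taylor that collectively established this statement; any ``proof'' fitting in a paragraph of a paper like this one can only be a map of the territory. The starting point is the family of $\ell$-adic Galois representations
\[
\rho_{E,\ell} : \gal(\bar{\mathbb{Q}}/\mathbb{Q}) \longrightarrow \gl_2(\mathbb{Z}_\ell)
\]
on the Tate modules $T_\ell E$, whose Frobenius traces at primes $p$ of good reduction equal $a_p(E) = p+1 - \#E(\mathbb{F}_p)$. Running the Eichler--Shimura construction backwards, it suffices to show that $\rho_{E,\ell}$ is \emph{modular}, meaning it arises from a weight-$2$ newform of level exactly the conductor $N$; the identity $a_p(E) = c_p$ for $p \nmid N$ is then automatic from compatibility of Frobenius traces with Hecke eigenvalues.

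First I would attack the residual representation. Taking $\ell = 3$ is the key trick, because $\gl_2(\mathbb{F}_3)$ is solvable; combining this with the Langlands--Tunnell theorem on base change shows that $\bar{\rho}_{E,3}$ is modular whenever it is absolutely irreducible. When $\bar{\rho}_{E,3}$ fails to be irreducible one performs the celebrated ``$3$--$5$ switch'': one constructs an auxiliary elliptic curve $E'$ with $\bar{\rho}_{E',3}$ irreducible and $\bar{\rho}_{E',5} \cong \bar{\rho}_{E,5}$, proves $E'$ modular by the $\ell=3$ argument, and then transports modularity back to $E$ using the prime $5$.

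The main obstacle, and the technical heart of the whole programme, is bridging the gap from the residual representation to the $\ell$-adic one. This is achieved by a modularity lifting theorem of the form $R = T$, identifying a universal deformation ring parametrising lifts with prescribed local behaviour with a Hecke algebra acting on a suitable space of modular forms. The Taylor--Wiles patching method supplies the proof: one enlarges the deformation problem at carefully chosen auxiliary primes (at which Hecke algebras acquire extra freeness), bounds the number of generators and relations on both sides, and passes to a limit to force equality. Wiles and Taylor--Wiles completed this in the semistable case, and Breuil--Conrad--Diamond--Taylor extended it by classifying the potentially crystalline and potentially semistable local deformation rings at $\ell = 3$, thereby covering every elliptic curve over $\mathbb{Q}$. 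With modularity of $\rho_{E,\ell}$ established, the statement of the theorem drops out.
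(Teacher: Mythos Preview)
Your proposal is correct and aligns with the paper's own treatment: the paper simply attributes the semistable case to Taylor and Wiles and the completion to Breuil, Conrad, Diamond and Taylor, without further detail. Your outline of the $\ell=3$ strategy via Langlands--Tunnell, the $3$--$5$ switch, and the $R=T$ patching argument is an accurate expansion of exactly those cited works, so there is nothing to correct.
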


\begin{proof}
This is due to Taylor and Wiles \cite{MR1333036, MR1333035} in the semi-stable case. The proof was completed by Breuil, Conrad, Diamond and Taylor \cite{MR1839918}.     
\end{proof}

The modularity of elliptic curves over $\mathbb{Q}$ can be seen as a converse to   

\begin{thm}[Eichler-Shimura] Let $f$ be a rational newform of level $N$. There exists an elliptic curve $E/\mathbb{Q}$ of conductor $N$ such that 
$a_p(E)=c_p$ for all primes $p \nmid N$, where $c_p$ is the $p$th coefficient of $f$ and $a_p(E)=p+1-\#E(\mathbb{F}_p)$.
\end{thm}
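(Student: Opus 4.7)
The plan is to construct $E$ as a one-dimensional quotient of the Jacobian $J_0(N)$ of the modular curve $X_0(N)$, cut out by the annihilator of $f$ in the Hecke algebra, and then match Frobenius traces to Hecke eigenvalues via the Eichler--Shimura congruence. I shall assume standard facts about $X_0(N)$: it is a smooth projective curve over $\mathbb{Q}$ with a proper flat model over $\mathbb{Z}[1/N]$, its Jacobian $J_0(N)/\mathbb{Q}$ is an abelian variety of dimension $g = \dim S_2(\Gamma_0(N))$, and the Hecke algebra $\mathbb{T} = \mathbb{Z}[T_n : n \ge 1]$ acts faithfully on $J_0(N)$ by $\mathbb{Q}$-endomorphisms.

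First I would attach to the rational newform $f = q + \sum c_n q^n$ the ring homomorphism $\lambda_f : \mathbb{T} \to \mathbb{Z}$ sending $T_n \mapsto c_n$ (well-defined and integer-valued precisely because $K_f = \mathbb{Q}$), and let $I_f = \ker \lambda_f$. Define
\[
E = J_0(N) / I_f \cdot J_0(N).
\]
Because $f$ is a newform, strong multiplicity one implies that the $f$-isotypic piece of $S_2(\Gamma_0(N)) \otimes \mathbb{C}$ is one-dimensional, so $E$ is an abelian variety of dimension $[K_f : \mathbb{Q}] = 1$, i.e.\ an elliptic curve over $\mathbb{Q}$. The quotient map $J_0(N) \to E$ is $\mathbb{T}$-equivariant, and $\mathbb{T}$ acts on $E$ through $\lambda_f$, so $T_p$ acts on $E$ as multiplication-like data with trace $c_p$ on the Tate module.

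The key step, and the main obstacle, is identifying $c_p$ with $a_p(E)$ for $p \nmid N$. For this I would invoke the Eichler--Shimura congruence relation: on the special fibre $X_0(N)_{\mathbb{F}_p}$ (which has good reduction for $p \nmid N$, as shown by Igusa and Deligne--Rapoport),
\[
T_p = \mathrm{Frob}_p + p \cdot \mathrm{Frob}_p^{\vee},
\]
as endomorphisms of $J_0(N)_{\mathbb{F}_p}$, where $\mathrm{Frob}_p$ is the relative Frobenius and $\mathrm{Frob}_p^{\vee}$ its dual (Verschiebung). Passing to the quotient $E$ and taking traces on $T_\ell E$ (for any $\ell \ne p$) yields $c_p = a_p(E)$, since $\mathrm{Frob}_p + \mathrm{Frob}_p^\vee$ has trace $a_p(E)$ on an elliptic curve with good reduction at $p$. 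The congruence itself is the deep input; the derivation of it rests on the modular interpretation of $T_p$ via the correspondence $(E,C) \mapsto \sum_{D} (E/D, (C+D)/D)$ over subgroup schemes of order $p$, which in characteristic $p$ degenerates into the Frobenius and Verschiebung isogenies.

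Finally, to see that the conductor of $E$ is exactly $N$ rather than a proper divisor, one checks that $E$ acquires bad reduction at every prime dividing $N$: if it had good reduction at some $p \mid N$, then by the converse direction the associated Galois representation would be unramified at $p$, forcing $f$ to come from level $N/p$, contradicting the assumption that $f$ is a \emph{new}form. The multiplicative/additive dichotomy and the precise exponent of $p$ in the conductor follow from the analysis of the Néron model of $J_0(N)$ at $p$ (Grothendieck, Deligne--Rapoport), which completes the proof.
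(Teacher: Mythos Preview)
Your sketch is the standard Shimura construction and is essentially correct; it is almost certainly what is carried out in the reference the paper cites. Note, however, that the paper does not actually prove this theorem: its entire ``proof'' is the single line \emph{See Chapter~8 of~[reference]}. So there is nothing substantive to compare---you have supplied an outline where the paper only supplies a citation.

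One small slip worth fixing: in your statement of the Eichler--Shimura congruence you wrote
\[
T_p = \mathrm{Frob}_p + p\cdot \mathrm{Frob}_p^{\vee},
\]
calling $\mathrm{Frob}_p^{\vee}$ the Verschiebung. If $\mathrm{Frob}_p^{\vee}$ denotes the dual isogeny (Verschiebung), so that $\mathrm{Frob}_p\circ\mathrm{Frob}_p^{\vee}=[p]$, then the correct relation is $T_p=\mathrm{Frob}_p+\mathrm{Frob}_p^{\vee}$, without the extra factor of $p$. With that correction, on the quotient $E$ one gets $[c_p]=\mathrm{Frob}_p+\mathrm{Frob}_p^{\vee}=[a_p(E)]$ as endomorphisms, hence $c_p=a_p(E)$. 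Your conductor argument (good reduction at $p\mid N$ would force $f$ to be old at $p$) is the right idea, though a fully rigorous version appeals to Carayol's theorem identifying the conductor of $E$ with the level of $f$.
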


\begin{proof}
See Chapter $8$ of \cite{MR2112196}.
\end{proof}

Given a rational newform of level $N$, the elliptic curves of conductor $N$ associated to it via the Eichler-Shimura theorem shall be computed using $\mathtt{MAGMA}$.  

\begin{prop} \label{levellow} Let $E/\mathbb{Q}$ be an elliptic curve with conductor $N$ and minimal discriminant $\Delta_{\min}$.
Let $l$ be an odd prime and define
\[ 
N_0(E,l):=N/\mathop{\prod_{{\textrm{primes } p \mid \mid N}}}_{l \mid \ord_p(\Delta_{\min})} p. 
\]
Suppose that the Galois representation
\[ 
\rho_l^E: \gal(\bar{\mathbb{Q}}/\mathbb{Q}) \to \aut(E[l])
\]
is irreducible. Then there exists a newform $f$ of level $N_0(E,l)$. Also there exists  a prime $\mathcal{L}$ lying above $l$ in the ring of integers $\mathcal{O}_f$ defined by the coefficients of $f$ such that  
\[
c_p \equiv \left\{ \begin{array}{ll} a_p(E)  \mod \mathcal{L} & \textrm{ if } p \nmid lN, \\ \pm(1+p) \mod \mathcal{L} & \textrm{ if } p \mid \mid N \textrm{ and } p \nmid lN_0, \end{array} \right.
\]
where $c_p$ is the $p$th coefficient of $f$. Furthermore, if $\mathcal{O}_f=\mathbb{Z}$ then
\[
c_p \equiv \left\{ \begin{array}{ll} a_p(E)  \mod l & \textrm{ if } p \nmid N, \\ \pm(1+p) \mod l & \textrm{ if } p \mid \mid N \textrm{ and } p \nmid N_0. \end{array} \right.
\]    
\end{prop}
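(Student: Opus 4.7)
The plan is to deduce this from Ribet's level-lowering theorem combined with the Modularity Theorem. First, the Modularity Theorem attaches to $E$ a rational newform $g$ of level $N$ whose $p$th coefficient equals $a_p(E)$ for every $p\nmid N$. The associated mod-$l$ Galois representation $\rho_l^E$ is, by hypothesis, irreducible, so it satisfies the conditions needed to run the Ribet machinery.

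Next I would analyse the ramification of $\rho_l^E$ at the primes $p\mid\mid N$. By the theory of the Tate curve, at a prime of multiplicative reduction $p\mid\mid N$ the representation $\rho_l^E$ restricted to a decomposition group at $p$ is (up to unramified twist) an extension of the trivial character by the mod-$l$ cyclotomic character, and it is unramified at $p$ precisely when $l\mid\ord_p(\Delta_{\min})$. Thus $\rho_l^E$ is unramified at exactly the primes stripped out in the definition of $N_0=N_0(E,l)$. Ribet's theorem (in the form given, for instance, in Chapter 15 of \cite{MR2312338}) then produces a newform $f$ of level $N_0$, together with a prime $\mathcal{L}\mid l$ in $\mathcal{O}_f$, such that the mod-$\mathcal{L}$ representation $\rho_{\mathcal{L}}^f$ is isomorphic to $\rho_l^E$.

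The congruences now fall out by comparing traces of Frobenius. For $p\nmid lN$ both representations are unramified at $p$, and taking traces of Frobenius on each side gives $c_p\equiv a_p(E)\pmod{\mathcal{L}}$. For a prime $p$ with $p\mid\mid N$ and $p\nmid lN_0$, the local structure of $\rho_l^E$ at $p$ recalled above shows that the trace of Frobenius on the inertia-invariants is $\pm(1+p)$ (the sign being that of the unramified twist governing split versus non-split multiplicative reduction), so $c_p\equiv\pm(1+p)\pmod{\mathcal{L}}$. Finally, if $\mathcal{O}_f=\mathbb{Z}$ then $\mathcal{L}=(l)$ and the congruences reduce to congruences modulo $l$.

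The conceptually hardest step here is of course Ribet's theorem itself, which we are quoting as a black box; the only work on our side is the standard verification that the hypotheses (modularity of $E$, irreducibility of $\rho_l^E$, and unramifiedness of $\rho_l^E$ at the primes being removed from the level) are met, and the elementary computation of the trace of Frobenius at primes of multiplicative reduction. Everything else is bookkeeping.
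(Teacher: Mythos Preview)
Your approach matches the paper's: the paper's own proof is just a citation of modularity plus Ribet's level-lowering (together with the Tate-curve criterion for $\rho_l^E$ being finite/unramified at primes of multiplicative reduction), exactly as you outline.

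One point you skate over: in the $\mathcal{O}_f=\mathbb{Z}$ case the statement does more than replace $\mathcal{L}$ by $l$. The hypotheses change from $p\nmid lN$ and $p\nmid lN_0$ to $p\nmid N$ and $p\nmid N_0$, so the congruences are now also asserted at $p=l$. Your trace-of-Frobenius comparison only applies at primes where both representations are unramified, and $l$ is excluded a priori. The paper attributes this extra strengthening to Kraus and Oesterl\'e; you should cite that rather than claim it follows from $\mathcal{L}=(l)$.
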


\begin{proof}
This arose from combining modularity with level-lowering results by Ribet~\cite{MR1047143, MR1265566}. The strengthening in the case $\mathcal{O}_f=\mathbb{Z}$ is due to Kraus and Oesterl{\'e}~\cite{MR1166121}.   
A detailed exploration is given, for example, in Chapter 2 of \cite{Sanderthesis}. 
\end{proof}

\begin{rem} \rm Let $E/\mathbb{Q}$ be an elliptic curve with conductor $N$.
Note that the exponents of the primes in the factorization of $N$ are uniformly bounded (see Section 10 in Chapter IV of \cite{MR1312368}). In particular, only primes of bad reduction divide $N$ and if $E$ has multiplicative reduction at $p$ then $p \mid \mid N$. 
\end{rem}

\begin{cor} \label{lbound} Keeping the notation of Proposition \ref{levellow}, if $p$ is a prime such that $p \nmid lN_0$ and  $p \mid N$ then
\[
l < (1+\sqrt{p})^{2[K_f: \mathbb{Q}]}.
\]    
\end{cor}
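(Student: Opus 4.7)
The plan is to apply Proposition \ref{levellow} to transfer the hypotheses into a congruence on the Fourier coefficient $c_p$, and then use the Weil bound on these coefficients to convert that congruence into the claimed inequality.

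Since $p \mid N$ while $p \nmid N_0$, the definition of $N_0(E,l)$ forces $p \mid\mid N$ with $l \mid \ord_p(\Delta_{\min})$. Combined with the hypothesis $p \nmid lN_0$, the second case of Proposition \ref{levellow} produces $\varepsilon \in \{\pm 1\}$ with $c_p \equiv \varepsilon(1+p) \pmod{\mathcal{L}}$. Equivalently, the algebraic integer $c_p-\varepsilon(1+p) \in \mathcal{O}_f$ lies in $\mathcal{L}$, so taking $N_{K_f/\mathbb{Q}}$ yields the integer divisibility
\[
l \,\bigm|\, N_{K_f/\mathbb{Q}}\bigl(c_p-\varepsilon(1+p)\bigr).
\]

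Next I would invoke the Ramanujan--Petersson bound. Because $p \nmid N_0$ and $N_0$ is the level of $f$, the form $f$ is unramified at $p$, so by the weight-$2$ case (due to Eichler--Shimura) each embedding $\sigma:K_f \hookrightarrow \mathbb{R}$ satisfies $|\sigma(c_p)| \leq 2\sqrt{p}$. Hence
\[
\bigl|\sigma(c_p-\varepsilon(1+p))\bigr| \leq 2\sqrt{p}+(1+p)=(1+\sqrt{p})^2,
\]
and multiplying over the $[K_f:\mathbb{Q}]$ embeddings bounds $\bigl|N_{K_f/\mathbb{Q}}(c_p-\varepsilon(1+p))\bigr|$ by $(1+\sqrt{p})^{2[K_f:\mathbb{Q}]}$.

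Finally, I would verify that the norm is nonzero and that at least one of the bounds above is strict. Nonvanishing is immediate because $2\sqrt{p} < 1+p$ for every prime $p$, so $c_p \neq \varepsilon(1+p)$. Equality in the triangle inequality at \emph{every} embedding would require $\sigma(c_p) = -\varepsilon \cdot 2\sqrt{p}$ for all $\sigma$, but then all Galois conjugates of $c_p$ would agree, forcing $c_p \in \mathbb{Q}$ and contradicting the irrationality of $-\varepsilon \cdot 2\sqrt{p}$. The chain
\[
l \leq \bigl|N_{K_f/\mathbb{Q}}(c_p-\varepsilon(1+p))\bigr| < (1+\sqrt{p})^{2[K_f:\mathbb{Q}]}
\]
then gives the result. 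I anticipate no serious obstacle in this argument; the only delicate point is the strictness of the final inequality, and that is disposed of by the short triangle-inequality observation above.
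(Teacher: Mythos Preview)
Your argument is correct and is the standard proof of this fact. The paper itself does not supply a proof at all: it simply cites Theorem~37 of Dahmen's thesis \cite{Sanderthesis}. The argument there is exactly the one you give---combine the congruence $c_p \equiv \pm(1+p) \pmod{\mathcal{L}}$ from level-lowering with the Ramanujan--Petersson bound $|\sigma(c_p)|\le 2\sqrt{p}$, take norms, and check nonvanishing and strictness. So your proposal matches the intended proof, just made explicit rather than deferred to the reference.
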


\begin{proof}
See Theorem 37 in \cite{Sanderthesis}.
\end{proof}

Applying Proposition \ref{levellow} to carefully constructed Frey curves has led to the solution of many Diophantine problems. The most famous of these is Fermat's Last theorem \cite{MR1333035} but there are now constructions for other equations and we shall make use of those described below.

\subsection{A Frey curve for cubic binary forms} \label{cbf} Let
\[
F(x,y)=t_0a^3+t_1^2y+t_2xy^2+t_3y^3 \in \mathbb{Z}[x,y]
\]
be a separable cubic binary form. 
In \cite{MR2406491} a Frey curve is given for the Diophantine equation
\begin{equation} \label{cub}
F(a,b)=dc^l,
\end{equation}
where $\gcd(a,b)=1$, $d \in \mathbb{Z}$ is fixed and $l \ge 7$ is prime. 
Define a Frey curve $E_{a,b}$ by
\begin{equation} \label{BillFrey}
E_{a,b}: y^2=x^3+a_2x^2+a_4x+a_6,
\end{equation}
where
\begin{eqnarray*}
a_2 &=& t_1a -t_2b, \\
a_4 &=& t_0t_2a^2 +(3t_0t_3 -t_1t_2)ab + t_1t_3b^2, \\
a_6 &=& t_0^2t_3a^3-t_0(t_2^2-2t_1t_3)a^2b+t_3(t_1^2-2t_0t_2)ab^2-t_0t_3^2b^3.
\end{eqnarray*}
Then $E_{a,b}$ has discriminant $16 \Delta_F F(a,b)^2$. Consider the Galois representation
\[
\rho_l^{a,b} : \gal(\bar{\mathbb{Q}}/\mathbb{Q}) \to \aut(E_{a,b}[l]).
\]
\begin{thm}[\cite{MR2406491}] \label{Bill} Let $S$ be the set of primes dividing $2d\Delta_F$. 
There exists a constant $\alpha(d,F) \ge 0$ such that if $l>\alpha(d,F)$ 
and $c \ne \pm 1$ then:
\begin{itemize}
\item the representation $\rho_l^{a,b}$ is irreducible;
\item at any prime $p \notin S$ dividing $F(a,b)$ the equation (\ref{BillFrey}) is minimal, the elliptic curve $E_{a,b}$ has multiplicative reduction and $l \mid \ord_p(\Delta_{min}(E_{a,b}))$.   
\end{itemize} 
\end{thm}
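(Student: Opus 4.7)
The plan is to split Theorem \ref{Bill} into two largely independent claims: the local analysis at primes $p \notin S$ dividing $F(a,b)$, and the uniform irreducibility of $\rho_l^{a,b}$ for large $l$. The local part is essentially computational and I would attack it first.

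For the local claim, fix a prime $p \notin S$ with $p \mid F(a,b)$. Writing the $c_4$-invariant of the model (\ref{BillFrey}) out as a polynomial in $t_0,\ldots,t_3,a,b$, the key computation is to check that the resultant in $(a,b)$ of $c_4$ and $F$ divides a power of $2\Delta_F$; separability of $F$ (i.e.\ $\Delta_F \neq 0$) then forces $p \nmid c_4(E_{a,b})$. Combined with $p \mid \Delta(E_{a,b}) = 16\Delta_F F(a,b)^2$, Tate's algorithm (or Kraus's criterion) gives multiplicative reduction at $p$, minimality of (\ref{BillFrey}) at $p$, and $\ord_p(\Delta_{\min}(E_{a,b})) = 2\ord_p(F(a,b))$. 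Since $F(a,b) = dc^l$ with $p \nmid d$ and $c \ne \pm 1$, one has $\ord_p(F(a,b)) = l\ord_p(c) > 0$, whence $l \mid \ord_p(\Delta_{\min})$. This step imposes no serious hypothesis on $l$.

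For irreducibility, assume $\rho_l^{a,b}$ is reducible, so $E_{a,b}$ admits a rational $l$-isogeny. By Mazur's theorem and its extensions due to Kenku and Momose, for $l$ beyond an absolute bound the only elliptic curves over $\mathbb{Q}$ with such an isogeny are CM, and even allowing CM the $j$-invariant $j(E_{a,b})$ is confined to a fixed finite subset of $\mathbb{Q}$. For each such $j_0$, the equation $j(E_{a,b}) = j_0$ is a polynomial relation in $a/b$ that pins down finitely many values of $a/b \in \mathbb{Q}$; combining with $F(a,b) = dc^l$ and the finiteness of solutions to Thue-type equations then restricts to finitely many primitive $(a,b)$. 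Choosing $\alpha(d,F)$ large enough to absorb all of these exceptional pairs, together with the small primes $l$ from the Mazur list and the finitely many $(a,b)$ producing CM specializations, yields the stated bound.

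The main obstacle is the uniformity in the one-parameter family $E_{a,b}$: one needs an inventory of which CM $j$-invariants are actually attained by the family (an algebraic check once $j(E_{a,b})$ is written in terms of $a/b$ and $t_0,\ldots,t_3$) and an effective Thue-type bound for the remaining $(a,b)$ satisfying both $j(E_{a,b})=j_0$ and $F(a,b)=dc^l$. Baker-style bounds suffice to guarantee existence of $\alpha(d,F)$, though not necessarily a computable value. The local step, by contrast, is purely a matter of verifying the resultant identity between $c_4$ and $F$, after which everything follows from Tate's algorithm.
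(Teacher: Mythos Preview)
The paper does not supply a proof of Theorem~\ref{Bill}: the result is quoted from \cite{MR2406491} and used as a black box, so there is nothing in the present paper to compare your proposal against.

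On its own merits, your local analysis is the standard one and is correct. The resultant of $c_4(E_{a,b})$ and $F(a,b)$, viewed as forms in $(a,b)$, is supported on the primes dividing $2\Delta_F$; hence for $p \notin S$ with $p \mid F(a,b)$ one has $p \nmid c_4$ and $p \mid \Delta$, giving multiplicative reduction, minimality of the model at $p$, and $\ord_p(\Delta_{\min}) = 2\ord_p(F(a,b)) = 2l\ord_p(c)$.

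Your irreducibility argument, however, is misstated and more elaborate than necessary. Over $\mathbb{Q}$, Mazur's isogeny theorem already asserts that \emph{no} elliptic curve---CM or not---admits a rational $l$-isogeny once $l \notin \{2,3,5,7,11,13,17,19,37,43,67,163\}$. Thus taking $\alpha(d,F) \ge 163$ settles irreducibility outright, and the detour through CM $j$-invariants, the equation $j(E_{a,b})=j_0$, and Thue-type finiteness is unnecessary. Your claim that ``for $l$ beyond an absolute bound the only elliptic curves over $\mathbb{Q}$ with such an isogeny are CM'' is the shape of statement one needs over a general number field; over $\mathbb{Q}$ the truth is stronger and simpler. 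The dependence of $\alpha$ on $(d,F)$ in the cited source presumably reflects sharper, effective bounds obtained there rather than any genuine obstruction to an absolute constant for the bare existence statement.
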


\subsection{Recipes for Diophantine equations with signature $(l,l,l)$} \label{rec}
The following recipe due to Kraus \cite{MR1611640} is taken from \cite{MR2312338}. Consider the equation
\[
Ax^l+By^l+Cz^l=0,
\]
with non-zero pairwise coprime terms and $l \ge 5$ prime. Setting $R=ABC$ assume that any prime $q$ satisfies $\ord_q(R)<l$. Without lost of generality also assume that $By^l \equiv 0 \mod 2$ and $Ax^l \equiv -1 \mod 4$.  
Construct the Frey curve
\[
E_{x,y}: Y^2=X(X-Ax^l)(X+By^l). 
\]
The conductor $N_{x,y}$ of $E_{x,y}$ is given by
\[
N_{x,y}=2^{\alpha}\rad_2(Rxyz),
\]
where
\[
\alpha= \left\{ \begin{array}{ll} 1 & \textrm{if } \textrm{ } \ord_2(R) \ge 5 
\textrm{ or } \ord_2(R)=0, \\
1 & \textrm{if } \textrm{ } 1 \le \ord_2(R) \le 4 \textrm{ and } y \textrm{ is even}, \\
0 & \textrm{if } \textrm{ } \ord_2(R)=4 \textrm{ and } y \textrm{ is odd}, \\
3 & \textrm{if } \textrm{ } 2 \le \ord_2(R) \le 3 \textrm{ and } y \textrm{ is odd}, \\
5 & \textrm{if } \textrm{ } \ord_2(R)=1 \textrm{ and } y \textrm{ is odd}.
\end{array} \right.  
\]
\begin{thm}[Kraus \cite{MR1611640}] \label{reci}
The Galois representation
\[
\rho_l^{x,y} : \gal(\bar{\mathbb{Q}}/\mathbb{Q}) \to \aut(E_{x,y}[l])
\]
is irreducible and $N_0(E_{x,y},l)$ in Proposition \ref{levellow} is given by   
\[
N_0=2^{\beta} \rad_2(R), 
\]
where
\[
\beta= \left\{ \begin{array}{ll} 1 & \textrm{if } \textrm{ } \ord_2(R) \ge 5 
\textrm{ or } \ord_2(R)=0, \\
0 & \textrm{if } \ord_2(R)=4, \\
1 & \textrm{if } \textrm{ } 1 \le \ord_2(R) \le 3 \textrm{ and } y \textrm{ is even}, \\
3 & \textrm{if } \textrm{ } 2 \le \ord_2(R) \le 3 \textrm{ and } y \textrm{ is odd}, \\
5 & \textrm{if } \textrm{ } \ord_2(R)=1 \textrm{ and } y \textrm{ is odd}.
\end{array} \right.  
\]
\end{thm}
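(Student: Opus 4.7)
The plan is to follow the classical Frey-curve recipe: compute the discriminant and conductor of $E_{x,y}$ via Tate's algorithm, then apply Mazur plus Ribet (as packaged in Proposition \ref{levellow}) to identify $N_0$.

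First I would compute the standard invariants of the model $Y^2=X(X-Ax^l)(X+By^l)$. Using the two-torsion at $X=0,\,Ax^l,\,-By^l$ and the relation $Ax^l+By^l+Cz^l=0$, one gets $\Delta(E_{x,y})=16(ABC)^2(xyz)^{2l}$, while $c_4$ is coprime to any prime dividing $xyz$ but not $R=ABC$ (because at such a prime exactly one of the three $X$-coordinates of the $2$-torsion is non-unit, so the other two differences are units). Hence at every odd prime $p\mid xyz$ with $p\nmid R$ the model is already minimal and multiplicative, with $\ord_p(\Delta_{\min})=2l$, contributing $p^1$ to $N_{x,y}$. These are exactly the primes that Ribet's level lowering will strip from $N_{x,y}$ to produce $N_0$, since $l\mid 2l$.

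Next I would handle odd primes $p\mid R$. The hypothesis $\ord_q(R)<l$ for all $q$ prevents the model from becoming non-minimal after an $l$-power scaling, and a short Tate-algorithm computation (or direct twist) shows that at such $p$ the reduction is still multiplicative with $\ord_p(\Delta_{\min})=2\ord_p(R)$, which is strictly less than $2l$. Since $l\nmid 2\ord_p(R)$ these primes survive level lowering and contribute $p^1$ to both $N_{x,y}$ and $N_0$. This accounts for the common factor $\rad_2(R)$ appearing in both formulas, with the odd part identical.

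The main obstacle is the $2$-adic analysis, which is where both $\alpha$ and $\beta$ come from. I would run Tate's algorithm at $p=2$ in each of the five cases distinguished by $\ord_2(R)$ and the parity of $y$ (recalling the normalisations $2\mid By^l$ and $Ax^l\equiv-1\pmod 4$). In each case one reads off the conductor exponent $\alpha$, then computes $\ord_2(\Delta_{\min})$ modulo $l$ to decide whether the prime $2$ is removed by level lowering; using $\ord_2(R)<l$ again, this yields the tabulated $\beta$. Finally, irreducibility of $\rho_l^{x,y}$ follows from Mazur's isogeny theorem: $E_{x,y}$ already carries a rational $2$-isogeny through its rational $2$-torsion, so a $\gal$-stable line of order $l$ would produce a rational cyclic $2l$-isogeny, which for $l\ge 5$ forces $j(E_{x,y})$ into Mazur's finite list, contradicting the fact that multiplicative reduction at the primes $p\mid xyz$, $p\nmid 2R$ forces $\ord_p(j)=-2l<0$ for at least one such~$p$ (possible because $xyz$ has a prime factor outside $2R$ once the equation is non-trivial, by the coprimality assumptions).
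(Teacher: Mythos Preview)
The paper does not give a proof of this result; Theorem~\ref{reci} is simply quoted from Kraus \cite{MR1611640} and used as a black box.  Your outline is exactly the route Kraus takes---compute $\Delta(E_{x,y})=16(ABC)^2(xyz)^{2l}$ and $c_4$, run Tate's algorithm prime by prime, appeal to Mazur/Kenku for irreducibility, then read off $N_0$ from Ribet's criterion---so you are reconstructing the cited argument rather than offering an alternative.

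Two steps in your sketch do need more care, however.  First, the irreducibility argument as written breaks at $l=5$: a rational cyclic $10$-isogeny does \emph{not} force $j$ into a finite list, since $X_0(10)$ has genus~$0$.  The point you are under-using is that $E_{x,y}$ has \emph{full} rational $2$-torsion, not just a single $2$-isogeny; combining this with a putative rational $5$-subgroup one produces, in the $2$-isogeny orbit of $E_{x,y}$, a curve carrying a rational cyclic $20$-isogeny, and $X_0(20)(\mathbb{Q})$ has only cusps.  Kraus carries out essentially this refinement (and checks $l=7$ against the two integral $j$-invariants coming from $X_0(14)$).  Second, your claim that $xyz$ always has a prime factor outside $2R$ ``by the coprimality assumptions'' is not true in general: for instance $A=B=1$, $C=-2$, $(x,y,z)=(1,1,1)$ gives a legitimate solution with $xyz=1$.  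Kraus disposes of such degenerate solutions by checking the finitely many candidate $j$-invariants directly, and you would have to do the same.  A minor slip: at an odd prime $p\mid R$ one can also have $p\mid xyz$, so $\ord_p(\Delta_{\min})=2\ord_p(R)+2l\ord_p(xyz)$ rather than $2\ord_p(R)$; this does not affect the conclusion since $0<\ord_p(R)<l$ still forces $l\nmid\ord_p(\Delta_{\min})$.
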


\section{Proof of Theorem \ref{finite}}

\begin{proof}[Proof of Theorem \ref{finite}] 
Assume that $W_1>1$ and $W_m$ is an $l$th power for some prime $l$.  
Firstly we will use the Frey curve for cubic binary forms constructed in Section \ref{cbf} and prove the existence of a prime divisor $p$ to which Corollary \ref{lbound} can be applied, giving a bound for $l$.   
Let $S$ be the set of primes dividing $27d$. By assumption, $W_1$ is divisible by a prime $q$.  Lemma~\ref{wedsp} gives that
\[
l \le \ord_{q}(W_m)=\ord_{q}(W_1)+\ord_{q}(m).
\]
Using Theorem \ref{prim1} (or that there are only finitely many solutions to a Thue-Mahler equation), let $l$ be large enough so that $W_n$ is divisible by a prime $p \notin S$, where 
\[
n=q^{l-\ord_{q}(W_1)}.
\]
Note that we can choose this lower bound for $l$ and $p$ independently of $m$. 
Then, using Proposition \ref{strongw}, $p \mid W_m$. Now construct a Frey curve $E_{U,V}$ for the Diophantine equation
\[
U_m^3+V_m^3=dW^l
\]  
as in Section \ref{cbf} (in our case $F(x,y)=x^3+y^3$) and consider the Galois representation
\[
\rho_l : \gal(\bar{\mathbb{Q}}/\mathbb{Q}) \to \aut(E_{U,V}[l]).
\]
Using Theorem \ref{Bill}, choose $l$ larger than some constant so that $p$ divides the conductor of $E_{U,V}$ exactly once and the primes dividing $N_0$ in Proposition \ref{levellow} belong to $S$. Since there are finitely many newforms of level $N_0$, Corollary \ref{lbound} bounds $l$.
Finally, for fixed $l$ there are finitely many solutions by Theorem 1 in \cite{MR1348707}. \end{proof} 

\section{Proof of Theorem \ref{uniform}}

\begin{proof}[Proof of Theorem \ref{uniform}] Assume that $W_m$ is an $l$th power. We will derive an $(l,l,l)$ equation (\ref{this}) which does not depend on $d$ and use the Frey curve given Section \ref{rec}. Then, similarly to the proof of Theorem \ref{finite}, the existence of a prime divisor $p_0$ will be shown which bounds $l$ via Corollary \ref{lbound}. Since $2 \mid W_1$, by Lemma~\ref{wedsp},
\[
l \le \ord_2(W_{m})=\ord_2(W_1)+\ord_2(m).
\]
Assume that $l>\ord_2(W_1)$. Then $\ord_2(m)>0$ so $m=2m'$ for some $m'$.

A Weierstrass equation for $C$ is
\begin{equation} \label{we}
y^2=x^3-2^43^3d^2,
\end{equation}
with coordinates $x=2^23d/(u+v)$ and $y=2^23^2d(u-v)/(u+v)$. Write 
$x(mP)=A_m/B_m^2$ and $y(mP)=C_m/B_m^3$ in lowest terms.

\begin{lem}[see Corollary 3.2 in \cite{EvOm}] \label{23}
Let $p=2$ or $3$.  then $p \mid W_m$ if and only if $p \nmid A_m$. 
\end{lem}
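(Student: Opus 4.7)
The plan is to work directly with the birational map between $C$ and the Weierstrass model given in the statement. Writing $s_m = U_m + V_m$, the transformation $x = 12d/(u+v)$ gives
\[
x(mP) = \frac{12\, d\, W_m}{s_m},
\]
and since $mP$ lies on $C$, the identity $U_m^3 + V_m^3 = d W_m^3$ factors as
\[
s_m\bigl(U_m^2 - U_m V_m + V_m^2\bigr) = d W_m^3.
\]
Because $x(mP) = A_m/B_m^2$ in lowest terms, the statement $p \mid A_m$ is equivalent to $\ord_p(x(mP)) > 0$, so the lemma reduces to controlling the sign of $\ord_p(x(mP))$ according to whether $p \mid W_m$.

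For the forward direction ($p \mid W_m \Rightarrow p \nmid A_m$), note that since $U_m/W_m$ and $V_m/W_m$ are in lowest terms, $p \nmid U_m V_m$. For $p=2$ both $U_m, V_m$ are odd, so $U_m^2 - U_m V_m + V_m^2$ is odd and the factorisation forces $\ord_2(s_m) = \ord_2(d) + 3\ord_2(W_m)$. For $p=3$ I would rewrite $U_m^2 - U_m V_m + V_m^2 = s_m^2 - 3 U_m V_m$; since $3 \nmid U_m V_m$ and $3 \mid W_m$ forces $3 \mid s_m$ via the factorisation, this expression has $\ord_3 = 1$ and so $\ord_3(s_m) = \ord_3(d) + 3\ord_3(W_m) - 1$. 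Substituting into the formula for $x(mP)$ gives, in both cases,
\[
\ord_p(x(mP)) = 2 - 2\,\ord_p(W_m) \leq 0,
\]
so $p \nmid A_m$. For the reverse direction ($p \nmid W_m \Rightarrow p \mid A_m$), the elements $u = U_m/W_m$ and $v = V_m/W_m$ lie in $\mathbb{Z}_p$, so $(u+v)(u^2 - uv + v^2) = d$ gives $\ord_p(u+v) \leq \ord_p(d)$, whence
\[
\ord_p(x(mP)) = \ord_p(12) + \ord_p(d) - \ord_p(u+v) \geq \ord_p(12) > 0,
\]
and therefore $p \mid A_m$.

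The only step requiring real care is the $p=3$ valuation computation in the forward direction, where the identity $U_m^2 - U_m V_m + V_m^2 = s_m^2 - 3 U_m V_m$ must be exploited to pin down $\ord_3(s_m)$ exactly. Everything else is routine bookkeeping once the map $x = 12dW_m/s_m$ and the cubic relation are in hand. The restriction to $p \in \{2,3\}$ is precisely the range where $\ord_p(12) > 0$, which is what makes the reverse direction automatic; at any other prime one would need a completely different argument to bound $\ord_p(u+v)$.
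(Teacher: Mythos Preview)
Your argument is correct. The paper does not supply its own proof of this lemma but simply defers to Corollary~3.2 of \cite{EvOm}; your computation via the explicit birational map $x = 12dW_m/s_m$ together with the factorisation $s_m(s_m^2 - 3U_mV_m) = dW_m^3$ gives a clean self-contained verification. One small remark: your opening claim that $p \nmid U_mV_m$ when $p \mid W_m$ deserves a word of justification, since ``lowest terms'' here means $\gcd(U_m,V_m,W_m)=1$ rather than that each fraction is separately reduced; but the cubic relation $U_m^3+V_m^3=dW_m^3$ immediately upgrades the former to the latter (if $p\mid U_m$ and $p\mid W_m$ then $p\mid V_m^3$, contradiction), so the step is sound.
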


The discriminant of (\ref{we}) is $-2^{12}3^9d^4$ so, since $d$ is cube free, it is minimal at any prime larger than $3$ (see Remark 1.1 in Chapter VII \cite{MR2514094}). Note that the group of points with non-singular reduction is independent of the choice of minimal Weierstrass equation. The projective equation of (\ref{we}) is
\[
Y^2Z=X^3-2^43^3d^2Z^3.
\]
Let $p > 3$ be a prime dividing $d$.  By assumption, the partial derivatives
\begin{equation} \label{pds}
\frac{\partial C}{\partial X} = -3X^2, \textrm{ } 
\frac{\partial C}{\partial Y}=2YZ \textrm{ } \textrm{ and } \textrm{ } 
\frac{\partial C}{\partial Z}=Y^2+2^43^4d^2Z^2
\end{equation}
do not vanish simultaneously at $P=[A_1B_1,C_1,B_1^3]$ over the field $\mathbb{F}_p$. Hence, noting that $2 \nmid A_m$ from Lemma \ref{23} and that non-singular points form a group, we have
\begin{equation} \label{cancel}
\gcd(A_{m}^3,C_{m}^2) \mid 3^{3+2\ord_3(d)} 
\end{equation}
for all $m$.

The inverses of the birational transformation are given by 
$u=(2^23^2d+y)/6x$ and $v=(2^23^2d-y)/6x$.
Thus
\begin{equation} \label{uv}
\frac{U_m}{W_m}=\frac{2^23^2dB_{m}^3+C_{m}}{6A_{m}B_{m}} \textrm{ } \textrm{ and } \textrm{ } \frac{V_m}{W_m}=\frac{2^23^2dB_{m}^3-C_{m}}{6A_{m}B_{m}}. 
\end{equation}
The assumptions made restrict the cancellation which can occur in (\ref{uv}) and, up to cancellation, if $W_m$ is an $l$th power then so is $A_m$.   
More precisely, since $W_m$ is an $l$th power and $2 \mid W_m$, Lemma \ref{23} and (\ref{cancel}) give that $A_{m}$ is an $l$th power multiplied by a power of $3$.   Using the duplication formula,
\begin{equation} \label{2Q2}
\frac{A_m}{B_m^2}=\frac{A_{m'}(A_{m'}^3+8(2^43^3d^2)B_{m'}^6)}{4B_{m'}^2(A_{m'}^3-2^43^3d^2B_{m'}^6)}=\frac{A_{m'}(A_{m'}^3+8(2^43^3d^2)B_{m'}^6)}{4B_{m'}^2C_{m'}^2}.
\end{equation}
Again, cancellation in (\ref{2Q2}) is restricted so $A_{m'}$ is also an $l$ power multiplied by a power of $3$. Write 
\[
m=2^{\ord_2(m)}n.
\] 
It follows that   
$A_{n}= 3^eA^l$,
\[
A_n^3+8(2^43^3d^2)B_n^6 = 3^f \bar{A}^l
\]
and $C_n= \pm 3^gC^l$. Combining with $C_n^2=A_n^3-2^43^3d^2B_n^6$ gives
\begin{equation} \label{this}
3^f\bar{A}^l + 2^33^{2g}C^{2l}=3^{2+3e}A^{3l}.
\end{equation}
Note that, by dividing (\ref{this}) through by an appropriate power of $3$, we can assume that $3$ divides at most one of the three terms. 

Let $p_0>3$ be a primitive divisor of $W_2$. Using Proposition \ref{strongw}, $p_0 \mid W_{2n}$ and, 
since $n$ is odd, $p_0 \mid \bar{A} C$. Now follow the recipe given in Section \ref{rec}. The conductor of the Frey curve for (\ref{this}) is
\[
N_{\bar{A}, C}=2^3 3^{\delta}\rad_3(\bar{A}CA)
\]
and $N_0=2^33^{\delta}$ in Theorem \ref{reci}, where $\delta=0$ or $1$. There is one newform
\[
f=q-q^3-2q^5+q^9+4q^{11}+\cdots
\]
of level $N_0=24$. Moreover, $f$ is rational. Since $p_0 \mid N_{\bar{A}, C}$ and $p_0 \nmid N_0$, 
\[
l < (1+\sqrt{p_0})^2
\]
by  Corollary~\ref{lbound}. Finally, for fixed $l>1$ there are finitely many solutions to (\ref{this}) (see Theorem $2$ in \cite{MR1348707}) and they are independent of $d$.  
\end{proof}   
    
\section{Proof of Theorem \ref{uniform2}}
\begin{proof}[Proof of Theorem \ref{uniform2}]
As in the proof of Theorem \ref{uniform}, consider 
$x(P)=A_P/B_P^2$ and $y(P)=C_P/B_P^3$ on the Weierstrass equation
\[
y^2=x^3-2^43^3d^2
\]
for $C$. Since $P$ is triple another rational point, a prime of bad reduction greater $3$ does not divide $A_P$ (see Section 3 in \cite{MR2486632}).   
Thus the partial derivatives (\ref{pds}) do not vanish simultaneously at $P$ and so at all primes greater than $3$, $P$ has non-singular reduction on a minimal Weierstrass for $C$.  

Now follow the proof of Theorem \ref{uniform} up to (\ref{2Q2}). Factorizing over 
$\mathbb{Z}[\sqrt{-3}]$ gives
\[  
A_n^3=C_n^2+2^43^3d^2B_n^6=(C_n+2^23dB_n^3\sqrt{-3})(C_n-2^23dB_n^3\sqrt{-3}).
\]
We have
\[
C_n+2^23dB_n^3\sqrt{-3}=(-1+\sqrt{-3})^s(a+b\sqrt{-3})^3/2^{s+3},
\]
where $s=0,1$ or $2$ and $a,b$ are integers of the same parity.
If $s=0$ then
\[
2^3(C_n+2^23dB_n^3\sqrt{-3})=a(a^2-9b^2)+3b(a^2-b^2)\sqrt{-3},
\]
so
\begin{eqnarray}
2^3C_n&=&a(a^2-9b^2), \label{h1} \\
2^5dB_n^3&=&b(a^2-b^2), \label{h2} \\
2^2A_n&=&a^2+3b^2. \label{h3}
\end{eqnarray}
If $s=1$ then
\begin{comment}
2^4(C_n+2^23dB_n^3\sqrt{-3})=-a^3+9ab^2-9a^2b+9b^3+(-3a^2b+3b^3+a^3-9ab^2)\sqrt{-3}
\end{comment}
\begin{eqnarray*}
2^4C_n&=&-a^3+9ab^2-9a^2b+9b^3, \\
2^63dB_n^3&=&a^3-3a^2b-9ab^2+3b^3,\\
2^2A_n&=&a^2+3b^2.
\end{eqnarray*}
If $s=2$ then
\begin{comment}
2^5(C_n+2^23dB_n^3\sqrt{-3})=-2a^3+18ab^2+18a^2b-18b^3+(-6a^2b+6b^3-2a^3+18ab^2)\sqrt{-3}.
\end{comment}
\begin{eqnarray*}
2^5C_n&=&-2a^3+18a^2b+18ab^2-18b^3, \\
2^73dB_n^3&=&-2a^3-6a^2b+18ab^2+6b^3, \\
2^2A_n&=&a^2+3b^2. 
\end{eqnarray*}
By Lemma \ref{23}, $6 \nmid A_n$ so we are in the case $s=0$. 

Suppose that $W_m$ is a square. Then, from (\ref{2Q2}), $C_n=\pm C^2$, $2B_n=\pm B^2$ and $A_n=A^2$. 
Since $\gcd(a,b) \mid 2^2$, one of $b$ or $a^2-b^2$ is coprime with the odd primes dividing $d$. If it is $b$ then multiplying (\ref{h1}) and (\ref{h3}) gives
\[
\pm 2^5 (AC)^2=a^5 - 6a^3b^2 - 27ab^4
\]
and, since $b$, up to sign, is either a square or $2$ multiplied by a square, dividing by $b^5$ gives a rational point on the hyperelliptic curve
\[
Y^2=X^5-6X^3-27X
\]
with non-zero coordinates; but computations implemented in $\mathtt{MAGMA}$ confirm that the Jacobian of the curve has rank $0$ and, via the method of Chabauty, there are no such points. If $a^2-b^2$ is coprime with the odd primes dividing $d$ then multiplying with (\ref{h3}) gives a rational point on the elliptic curve
\[
\pm Y^2=X^4+2X^2-3
\]
or on the elliptic curve
\[
\pm 2^3Y^2=X^4+2X^2-3
\]
with non-zero coordinates; but there are no such points.

Suppose that $W_m$ is an $l$th power for some odd prime $l$. Then, from (\ref{2Q2}), $C_n$, $2B_n$ and $A_n$ are $l$th powers. If $a$ is odd then (\ref{h1}) gives $a=C^l$, $a^2-9b^2=2^3\bar{C}^l$ and
\begin{equation} \label{t1}
C^{2l}-2^3\bar{C}^l=9b^2.
\end{equation}
If $a$ is even then $a=2C^l$, $a^2-9b^2=2^2\bar{C}^l$ and 
\begin{equation} \label{t2}
2^2C^{2l}-2^2\bar{C}^l=9b^2.
\end{equation}
Thus, Theorem 15.3.4 in \cite{MR2312338} (due to Bennett and Skinner \cite{MR2031121}, Ivorra \cite{MR1979902} and Siksek~\cite{MR2142239}) and Theorem 15.3.5 in \cite{MR2312338} (due to Darmon and Merel \cite{MR1468926}) give that $l \le 5$. If $l=3$ then we have a rational point on the elliptic curve
\[
Z^6+X^3=Y^2;
\]
this curve has rank and gives a possible
solution $\bar{C}=-1$, $a=C = \pm 1$ and $b=\pm 1$, but, 
from (\ref{h2}), we would have $B_n=0$. If $l=5$ then we have a rational point on the hyper elliptic curve
\[
Y^2=8^eX^5+1,
\]
where $e=0$ or $1$; but computations implemented in $\mathtt{MAGMA}$ confirm, via the method of Chabauty, that no such points give a required solution. 
\end{proof}
 
\bibliographystyle{amsplain}
\bibliography{myrefs}

\end{document}